\documentclass[amscd,amssymb,verbatim,12pt]{amsart}
\usepackage{graphicx}

\theoremstyle{definition}

\newif\ifAMS
\IfFileExists{amssymb.sty}
  {\AMStrue\usepackage{amssymb}}
  {\usepackage{latexsym}}
\theoremstyle{plain}

\newtheorem{Thm}{Theorem}[section]
\newtheorem{Cor}[Thm]{Corollary}
\newtheorem{Lem}[Thm]{Lemma}

\theoremstyle{definition}
\newtheorem{Def}[Thm]{Definition}

\theoremstyle{remark}

\newtheorem{Ex}[Thm]{Example}

\newcommand{\interior}{^{ \kern-5pt ^\circ}}
\newcommand {\bd}{\partial}

\newcommand {\R}{{\mathbb R}}
\newcommand {\Z}{{\mathbb Z}}

\newcommand {\E}{{\mathbb E}}

\begin{document}
\title{Asymptotic dimension of 3-dimensional CAT(0) manifolds}

\author{ Panos Papasoglu}

\author
{Eric Swenson }

\subjclass[2020]{Primary 53C23; Secondary 51F30, 54F45}
\email{papazoglou@maths.ox.ac.uk}
\address{Mathematical Institute, University of Oxford,\\ Woodstock Road, Oxford OX2 6GG, U.K.}

\email {eric@math.byu.edu}
\address
 {Mathematics Department, Brigham Young University,
Provo UT 84602}
\begin{abstract}
We prove that every $3$-manifold equipped with a proper metric
admitting a convex geodesic bicombing has Assouad--Nagata dimension
at most $3$. In particular, every proper CAT$(0)$ $3$-manifold has
Assouad--Nagata dimension at most $3$.  As a corollary, the isoperimetric results of Lang--Stadler--Urech
\cite{LSU} and Peteranderl \cite{Pet}, previously applicable to
$3$-dimensional Hadamard manifolds, also apply to proper CAT$(0)$
$3$-manifolds of asymptotic rank at most $2$.
\end{abstract}
\thanks{}
\maketitle

\section {Introduction}

The notion of asymptotic dimension was introduced by Gromov and has
become an important invariant in geometric group theory and coarse
geometry.  The Assouad--Nagata dimension is a closely related metric
notion in which one requires linear control on the diameter of the
sets appearing in the covers.  In particular,
\[
\operatorname{asdim} X\leq
\operatorname{asdim}_{\mathrm{AN}} X\leq
\dim_{\mathrm{AN}} X.
\]

In \cite{FP} it was shown that a geodesic metric space which admits
an injective continuous map into $\R^2$ has Assouad--Nagata dimension
at most three, uniformly.  J{\o}rgensen and Lang \cite{JL} improved
this bound to two (see also \cite{BBEGLPS}).  They also used this result, together with a
Hurewicz type argument, to show that every $3$-dimensional Hadamard
manifold has Assouad--Nagata dimension three.

In this paper we consider the corresponding question without the
smoothness assumption.  In fact we work in the more general setting
of metric manifolds admitting convex geodesic bicombings.  Recall that
CAT(0) spaces, Busemann spaces and Hadamard manifolds admit convex
geodesic bicombings; see \cite{DE-LA}.  Our main result is the
following:

\medskip
\noindent
{\bf Theorem \ref{thm:AN-combing}.}
{\em Let $X$ be a $3$-manifold with a proper metric.  If $X$ admits
a convex geodesic bicombing, then
\[
\dim_{\mathrm{AN}}(X)\leq 3.
\]
In particular,
\[
\operatorname{asdim}_{\mathrm{AN}}(X)\leq 3
\quad\text{and}\quad
\operatorname{asdim}(X)\leq 3.
\]}
\medskip

Thus, in particular, every proper CAT(0) $3$-manifold has
Assouad--Nagata dimension at most three.

One motivation for this result comes from the recent work of
Lang--Stadler--Urech \cite{LSU}. They prove an isoperimetric gap
theorem for CAT(0) spaces of asymptotic rank at most two and finite
asymptotic Assouad--Nagata dimension, with filling exponent
arbitrarily close to $1$. A recent theorem of Peteranderl \cite{Pet}
strengthens this to the optimal linear filling inequality. Their
results apply, in particular, to $3$-dimensional Hadamard manifolds
by the theorem of J{\o}rgensen--Lang. Theorem~\ref{thm:AN-combing}
allows one to replace Hadamard manifolds by CAT(0) $3$-manifolds.

We use the language of metric integral currents in the sense of
Ambrosio--Kirchheim \cite{AK}. Informally, an element of
${\mathbf I}_{k,\mathrm c}(X)$ may be viewed as a compactly supported,
oriented $k$-dimensional Lipschitz chain, allowing integer
multiplicities and suitable limits. Its boundary is denoted by
$\partial T$, and its mass ${\mathbf M}(T)$ is the metric analogue of
$k$-dimensional volume. A current $T$ is a cycle if $\partial T=0$,
and a filling of $T$ is a current $V$ with $\partial V=T$. We use the
notion of asymptotic rank as in \cite{LSU}.

\medskip
\noindent
{\bf Corollary \ref{cor:isoperimetric}.}
{\em Let $X$ be a proper CAT$(0)$ $3$-manifold of asymptotic rank at
most $2$. Then, for every $k\geq2$, there is a constant $C$ such that
every cycle $T\in {\mathbf I}_{k,\mathrm c}(X)$ admits a filling
$V\in {\mathbf I}_{k+1,\mathrm c}(X)$ satisfying
\[
\partial V=T
\qquad\text{and}\qquad
{\mathbf M}(V)\leq C\,{\mathbf M}(T).
\]}
\medskip

\subsection{Proof outline}
We briefly describe the argument.  Fix a point $p\in X$.  The first
part of the paper is concerned with the metric spheres
\[
S(p,r)=\{x\in X:d(p,x)=r\}.
\]
The convex geodesic bicombing gives a contraction of metric balls
towards $p$.  Using this contraction and results on homology
manifolds, we show that, when $X$ is a $3$-manifold, every
$S(p,r)$ is a topological $2$-sphere.  We further show that the path
metric on $S(p,r)$ induces its original topology.  In particular,
with its path metric, $S(p,r)$ is a compact geodesic metric
$2$-sphere.

We then use the arguments of Fujiwara--Papasoglu \cite{FP} and
J{\o}rgensen--Lang \cite{JL}. Although their results are stated for
planar geodesic spaces, the topological use of planarity in the
relevant argument is only the separation property supplied by the
Jordan curve theorem, and the same argument applies to a geodesic
metric $2$-sphere. It follows that the metric spheres $S(p,r)$ admit
uniformly controlled three-colour covers at every scale.

The next step is to pass from spheres to annuli.  If
\[
A(p;r,s)=\{x\in X:r\leq d(p,x)<r+s\},
\]
the bicombing gives a radial map from a slightly enlarged annulus to
a sphere lying just inside it.  This map is $2$-Lipschitz, so the
cover of the sphere gives a uniformly bounded cover of the annulus.
There is a small point here concerning the two metrics on the
annulus.  The separation of the resulting cover is initially obtained
for the path metric of the enlarged annulus, whereas the
Assouad--Nagata dimension is defined using the metric of $X$.  If
$x,y$ lie in the original annulus and $d(x,y)<s$, convexity of the
bicombing implies that the geodesic $\sigma_{xy}$ stays inside the
enlarged annulus.  Consequently the path metric and the metric of
$X$ agree for such $x,y$.  This gives the required separation in the
original metric.

Finally, a Hurewicz type covering argument, adapted from
\cite{JL}, combines the uniformly controlled covers of the annuli.
This gives a four-colour cover of $X$ at every scale and proves
Theorem~\ref{thm:AN-combing}. The linear form of
Corollary~\ref{cor:isoperimetric} then follows from \cite{Pet}; the
earlier estimate with exponent $1+\delta$ follows from
\cite[Theorem~1.1]{LSU}.

\section{Preliminaries}
For $X$ a metric space with metric $d$, $p\in X$, and $r>0$ we define the open metric ball
\[
B(p,r)=\{x\in X:\ d(p,x)<r\},
\]
and the closed metric ball
\[
\bar B(p,r)=\{x\in X:\ d(p,x)\leq r\}.
\]
The sphere of radius $r$ about $p$ is
\[
S(p,r)=\{x\in X:\ d(p,x)=r\}=\bar B(p,r)\setminus B(p,r).
\]

We refer the reader to \cite{BRI-HAE} or \cite{BAL} for more details of the following.
A geodesic in a metric space $X$ is an isometric embedding of an interval of $\R$.
\begin{Def}
For $X$ a geodesic metric space and $\Delta(a,b,c)$ a geodesic triangle in $X$ with vertices $a,b,c\in X$, let $\bar\Delta=\Delta(\bar a,\bar b,\bar c)\subset \E^2$ be a comparison triangle, so that
\[
d(a,b)=d(\bar a,\bar b),\qquad d(a,c)=d(\bar a,\bar c),\qquad d(b,c)=d(\bar b,\bar c).
\]
For a point $z$ on a specified side of $\Delta(a,b,c)$, its comparison point $\bar z$ is the point on the corresponding side of $\bar\Delta$ at the same distance from the corresponding endpoint. We say that $\Delta(a,b,c)$ is CAT(0) if, for any points $y,z$ on specified sides of $\Delta(a,b,c)$ and their corresponding comparison points $\bar y,\bar z\in\bar\Delta$,
\[
d(y,z)\leq d(\bar y,\bar z).
\]
The space $X$ is said to be CAT(0) if every geodesic triangle in $X$ is CAT(0).
\end{Def}

\section{Convex geodesic bicombings} The following idea is from \cite{DE-LA}.
\begin{Def} A {\em geodesic bicombing} is a function $\sigma: X\times X \times[0,1] \to X$ such that for any $x,y \in X$, $\sigma_{xy}(t):= \sigma(x,y,t)$ defines a constant speed geodesic $\sigma_{xy}:[0,1] \to X$ from $x$ to $y$. We say that $\sigma$ is {\em convex} if $t \mapsto d(\sigma_{ab}(t), \sigma_{cd}(t))$ is a convex function for any $a,b,c,d \in X$.  We say that a geodesic bicombing is {\em consistent} if whenever $p =\sigma_{xy}(s)$ and $q=\sigma_{xy}(t)$ for $0\le s<t \le 1$ then $\sigma_{pq}$ is just $\sigma_{xy}|_{[s,t]}$ re-parameterized.  This is a possible issue since geodesics might not be unique.
\end{Def}
Hadamard manifolds, CAT(0) spaces and Busemann spaces are all examples of spaces which admit a convex geodesic bicombing. 
\begin{Ex}
In \cite{DE-LA} it is stated that every normed linear space admits a convex geodesic bicombing.  We will explain this here. Let $V$ be a normed linear space; the line segment from $\mathbf u $ to $\mathbf v$ in $V$ is $\sigma_{\mathbf u \mathbf v}(t) =  (1-t)\mathbf u +t 
\mathbf v$, $0\le t\le 1$.  To see that this is a constant speed geodesic, notice that 
$\sigma_{\mathbf 0 \mathbf v}(t) = t 
\mathbf v$ and so $||\sigma_{\mathbf 0 \mathbf v}(t)||= t|| \mathbf v||$.  Thus $\sigma_{\mathbf 0 \mathbf v}$ is a constant speed geodesic for any $\mathbf v \in V$ and since translations are isometries of $V$, $\sigma_{\mathbf u \mathbf v}$ is a constant speed geodesic of $V$ for any $\mathbf u, \mathbf v \in V$.  This now defines our consistent geodesic bicombing $\sigma: V \times V \times [0,1]\to V$.   

Fix $\mathbf a, \mathbf b, \mathbf c,\mathbf d \in V$ and consider the function $f(t) = d\left(\sigma_{\mathbf a \mathbf b}(t), \sigma_{\mathbf c \mathbf d}(t)\right)$
\begin{align}f(t) &= ||[(1-t)\mathbf a +t\mathbf b] -[(1-t)\mathbf c +t\mathbf d]|| \notag\\
&=||(1-t)(\mathbf a-\mathbf c) +t(\mathbf b-\mathbf d)|| \notag\\
&\le (1-t)||\mathbf a-\mathbf c|| +t ||\mathbf b-\mathbf d|| \notag\\
&= (1-t)f(0) + t f(1) \notag\end{align}
Since $\sigma$ is consistent it follows  that $f$ is a convex function (see \cite{DE-LA}).
\end{Ex}
We have the following easy result:
%The following obvious result was not even mentioned in \cite{DE-LA}.
\begin{Lem}
    A convex geodesic bicombing is continuous.
\end{Lem}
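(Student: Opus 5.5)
We show that $\sigma\colon X\times X\times[0,1]\to X$ is jointly continuous, in fact locally Lipschitz, using only convexity and the constant-speed property. Fix $(x,y,t)$ and $(x',y',t')$. By the triangle inequality,
\[
d\bigl(\sigma_{xy}(t),\sigma_{x'y'}(t')\bigr)\le d\bigl(\sigma_{xy}(t),\sigma_{x'y'}(t)\bigr)+d\bigl(\sigma_{x'y'}(t),\sigma_{x'y'}(t')\bigr).
\]
We bound the two terms separately.

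For the second term, $\sigma_{x'y'}$ is a constant speed geodesic on $[0,1]$ with speed $d(x',y')$. Hence
\[
d\bigl(\sigma_{x'y'}(t),\sigma_{x'y'}(t')\bigr)=d(x',y')\,|t-t'|\le \bigl(d(x,y)+d(x,x')+d(y,y')\bigr)|t-t'|.
\]

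For the first term, the function $f(s)=d(\sigma_{xy}(s),\sigma_{x'y'}(s))$ is convex on $[0,1]$ by hypothesis. It therefore lies below its chord:
\[
f(t)\le (1-t)f(0)+tf(1)=(1-t)\,d(x,x')+t\,d(y,y')\le \max\{d(x,x'),d(y,y')\}.
\]

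Combining the two bounds,
\[
d\bigl(\sigma_{xy}(t),\sigma_{x'y'}(t')\bigr)\le \max\{d(x,x'),d(y,y')\}+\bigl(d(x,y)+d(x,x')+d(y,y')\bigr)|t-t'|.
\]
The right-hand side tends to $0$ as $(x',y',t')\to(x,y,t)$. This proves continuity. Indeed $\sigma$ is Lipschitz on sets where $d(x,y)$ is bounded, and for fixed $t$ the map $(x,y)\mapsto\sigma_{xy}(t)$ is $1$-Lipschitz for the max metric on $X\times X$.

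There is no real obstacle. The only point to check is that convexity is assumed as a hypothesis, so the chord inequality is available directly. In particular, no consistency of $\sigma$ is needed here, unlike in the normed-space example, where consistency was used to upgrade the chord inequality to convexity.
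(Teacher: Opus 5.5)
Your proof is correct and is essentially the paper's argument: a triangle inequality that splits off a time-variation term, with the endpoint-variation term controlled by the chord bound $(1-t)\,d(x,x')+t\,d(y,y')$ that convexity provides. The only difference is the order of the split. The paper varies $t$ along the fixed geodesic $\sigma_{xy}$ and applies convexity at time $t_n$, which avoids bounding the speed $d(x',y')$. Your ordering costs one extra triangle inequality but yields an explicit local Lipschitz estimate.
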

\begin{proof}
    Clearly $X\times X \times [0,1] $ is metrizable.  Let $(x_n,y_n,t_n)$ be a sequence in $X\times X\times [0,1]$ with $x_n \to x\in X$, $y_n \to y\in X$, and $t_n \to t \in [0,1]$. Notice that $$d(\sigma(x,y,t),\sigma(x_n,y_n,t_n))\le d(\sigma_{xy}(t),\sigma_{xy}(t_n)) +d(\sigma_{xy}(t_n),\sigma_{x_ny_n}(t_n)).$$
    By convexity $d(\sigma_{xy}(t_n), \sigma_{x_ny_n}(t_n))\le (1-t_n)d(x,x_n) +t_nd(y,y_n)$.
    Since $d(x,x_n)$, $d(y,y_n)$ and $d(\sigma_{xy}(t),\sigma_{xy}(t_n))$ all go to $0$ the result follows.
\end{proof}
\begin{Def}For $X$ a metric space with a convex geodesic bicombing, and $p \in X$, we define the geodesic contraction to $p$, $\tau_p:X\times[0,1]\to X$, by $\tau_p(y,t)=\sigma(p,y,t)$. 
\end{Def} Since $\tau_p$ is just the restriction of $\sigma$ to the subspace $\{p\}\times X \times [0,1]$, $\tau_p$ is continuous. Thus a metric space with a convex geodesic bicombing is contractible.
\begin{Lem} \label{C:cont}
    If $X$ is a metric space with a convex geodesic bicombing, the contraction $\tau_p$ defined above is distance non-increasing and for any $A$ with
   $B(p,r)\subset A \subset \bar B(p,r)$ for $r >0$, the restriction $\tau_p|_A$ gives a distance non-increasing contraction of $A$ to $p$, with $\tau_p(A\times [0,1)) \subset B(p,r)$
\end{Lem}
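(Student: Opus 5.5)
We use the convexity of $t\mapsto d(\sigma_{pa}(t),\sigma_{pb}(t))$ for $a=x$, $b=y$, and also for the degenerate pair $a=b=p$. The interpretation of "distance non-increasing" we aim for is: for each fixed $t\in[0,1]$, the map $\tau_p(\cdot,t)$ is $1$-Lipschitz, i.e. $d(\tau_p(x,t),\tau_p(y,t))\le d(x,y)$. Continuity of $\tau_p$ is already known from the preceding lemma, so the content is this Lipschitz estimate together with the ball-preservation statements.

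\emph{Step 1 (Lipschitz bound).} Fix $x,y\in X$ and set $f(t)=d(\sigma_{px}(t),\sigma_{py}(t))$. At $t=0$ both geodesics start at $p$, so $f(0)=0$; at $t=1$ we have $f(1)=d(x,y)$. Convexity gives
\[
f(t)\le (1-t)f(0)+tf(1)=t\,d(x,y)\le d(x,y),
\]
so $\tau_p(\cdot,t)$ is $t$-Lipschitz, in particular distance non-increasing.

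\emph{Step 2 (radial estimate).} Since $\sigma_{px}$ is a constant speed geodesic from $p$ to $x$ on $[0,1]$, we have $d(p,\tau_p(x,t))=t\,d(p,x)$. Alternatively, apply convexity to the pair $\sigma_{pp}$ (the constant map at $p$) and $\sigma_{px}$. Now suppose $B(p,r)\subset A\subset \bar B(p,r)$ and $x\in A$, so $d(p,x)\le r$. For $t<1$ this gives $d(p,\tau_p(x,t))=t\,d(p,x)\le tr<r$, which is exactly $\tau_p(A\times[0,1))\subset B(p,r)\subset A$. For $t=1$ we get $\tau_p(x,1)=x\in A$.

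\emph{Step 3 (conclusion).} By Steps 1 and 2, $\tau_p$ maps $A\times[0,1]$ into $A$. It satisfies $\tau_p(\cdot,1)=\mathrm{id}_A$ and $\tau_p(\cdot,0)\equiv p$, and it is continuous, so it is a contraction of $A$ to $p$ (reparametrize $t\mapsto 1-t$ if one prefers to start at the identity). Each time-slice is distance non-increasing by Step 1.

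There is no real obstacle here. The only point needing care is that the estimate holds for $r>0$ with the strict inequality $tr<r$ when $t<1$, which requires $r>0$. If $d(p,x)=0$, the point $x=p$ stays at $p$, which lies in $B(p,r)$ because $r>0$.
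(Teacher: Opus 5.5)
Your proposal is correct and follows essentially the same route as the paper: convexity with the common initial point $p$ gives $d(\tau_p(x,t),\tau_p(y,t))\le t\,d(x,y)$, and the radial estimate gives $\tau_p(A\times[0,1))\subset B(p,r)\subset A$. Your use of the exact identity $d(p,\tau_p(x,t))=t\,d(p,x)$ is slightly cleaner than the paper's argument that $\tau_p(y,t)\neq y$ for $t<1$, because it also covers the degenerate point $y=p$ without a separate remark.
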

\begin{proof}
First we show that $\tau$ is distance non-increasing. Let $x,y \in X$ and $t \in [0,1]$.
By convexity,
 $$d(\tau(x,t), \tau(y,t))= d(\sigma_{px}(t),\sigma_{py}(t)) \le td(x,y)\le d(x,y).$$
 
For any $y\in A$, $\tau(y,1) = \sigma(p,y,1) =y$, and $\tau(y,0) = \sigma(p,y,0) =p$.  
For $y\in A$ and $t<1$, the point $\tau(y,t)$ is on a geodesic from $p$ to $y$ but is not equal to $y$.  It follows that $d(p, \tau(y,t)) < d(p,y) \le r$.  Thus $\tau (A \times [0,1)) \subset B(p,r)$.

\end{proof}
\begin{Lem}\label{L:lip}
  If $X$ is a metric space with a convex geodesic bicombing $\sigma$, then for any $p \in X$ and any bounded $A \subset X \setminus B(p,r)$  (for some $r>0$) $\tau_p$ induces a homotopy of $A$ into $S(p,r)$ in $X\setminus B(p,r)$. The radial map $H(\cdot,0):A\to S(p,r)$ is $2$-Lipschitz.
\end{Lem}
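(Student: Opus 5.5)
Define the homotopy $H:A\times[0,1]\to X$ by rescaling the contraction so that every point stops when it hits the sphere. For $y\in A$ put $\rho(y)=d(p,y)\ge r$ and set
\[
H(y,s)=\tau_p\Bigl(y,\ s+(1-s)\tfrac{r}{\rho(y)}\Bigr)=\sigma_{py}\Bigl(s+(1-s)\tfrac{r}{\rho(y)}\Bigr),\qquad s\in[0,1].
\]
Then $H(y,1)=y$. Since $\sigma_{py}$ is a constant speed geodesic from $p$, we have $d(p,\sigma_{py}(t))=t\rho(y)$. Hence $d(p,H(y,s))=s\rho(y)+(1-s)r\ge r$, so $H$ stays in $X\setminus B(p,r)$, and $d(p,H(y,0))=r$, so $H(\cdot,0)$ maps $A$ into $S(p,r)$. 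Because $r>0$, $\rho$ is continuous and bounded below by $r$, so the parameter $s+(1-s)r/\rho(y)$ depends continuously on $(y,s)$. Continuity of $H$ then follows from continuity of $\sigma$ (the Lemma that a convex bicombing is continuous). The boundedness of $A$ is not needed for this part; it only guarantees $\rho$ is bounded.

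For the Lipschitz estimate, write $R(y)=H(y,0)=\sigma_{py}(r/\rho(y))$. Take $x,y\in A$, and assume without loss of generality $\rho(x)\le\rho(y)$. Set $t_x=r/\rho(x)\ge t_y=r/\rho(y)$. Then
\[
d(R(x),R(y))\le d\bigl(\sigma_{px}(t_x),\sigma_{px}(t_y)\bigr)+d\bigl(\sigma_{px}(t_y),\sigma_{py}(t_y)\bigr).
\]
For the second term, convexity with the common endpoint $p$ (as in Lemma~\ref{C:cont}) gives the bound $t_y\,d(x,y)\le d(x,y)$. For the first term, the constant speed gives
\[
(t_x-t_y)\rho(x)=r-\frac{r\rho(x)}{\rho(y)}=\frac{r}{\rho(y)}\bigl(\rho(y)-\rho(x)\bigr)\le \rho(y)-\rho(x)\le d(x,y),
\]
using the triangle inequality at $p$. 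Adding the two bounds yields $d(R(x),R(y))\le 2d(x,y)$.

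The main point to get right is the choice of which geodesic to compare along. The order $\rho(x)\le\rho(y)$ must be chosen so that the comparison happens at the smaller parameter $t_y$, where the convexity factor is at most $1$, and so that the radial term carries the factor $r/\rho(y)\le1$. If the order were reversed, the first term would carry a factor $\rho(y)/\rho(x)$, which could be large. Beyond that choice, the argument is routine.
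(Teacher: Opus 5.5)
Your proof is correct, and your homotopy is the paper's: your parameter $s+(1-s)r/\rho(y)$ equals the paper's $(1-r/g(a))t+r/g(a)$.

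The Lipschitz estimate has the same shape in both arguments, one convexity leg plus one radial leg, but you go through a different intermediate point.
\begin{itemize}
\item The paper assumes $g(x)\le g(y)$ and moves $y$ along its own geodesic to $\hat y=\sigma_{py}(r/g(x))$. It gets $d(\hat x,\hat y)\le d(x,y)$ from convexity. It then bounds the radial leg without computation: $\tilde y=H(y,0)$ is a nearest point of $S(p,r)$ to $\hat y$, so $d(\hat y,\tilde y)=d(\hat y,S(p,r))\le d(\hat y,\hat x)$.
\item You stop on $x$'s geodesic at the parameter $r/\rho(y)$. You bound the radial leg explicitly, using that $d(p,\cdot)$ is $1$-Lipschitz.
\end{itemize}
Your route costs a line of algebra. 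In return it gives the slightly sharper bound $d(R(x),R(y))\le \frac{2r}{\max\{\rho(x),\rho(y)\}}\,d(x,y)$. The paper's route avoids the computation.

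Your closing remark needs a correction: the ordering $\rho(x)\le\rho(y)$ is not essential. If $\rho(x)>\rho(y)$ and you use the same intermediate point $\sigma_{px}(r/\rho(y))$, the first leg equals $\frac{r}{\rho(y)}(\rho(x)-\rho(y))$. This is at most $d(x,y)$ because $r\le\rho(y)$, so no large factor such as $\rho(y)/\rho(x)$ ever appears. Also, the convexity factor is at most $1$ at every parameter in $[0,1]$. Indeed the paper compares at the larger parameter $r/g(x)$, not the smaller one.
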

\begin{proof}
    The function $g:X \to \R$  defined by      $g(x) =d(p,x)$ is continuous.  Since the geodesics of $\sigma$ are constant speed, for any $x\in X\setminus B(p,r)$, $\tau_p\left(x, \frac r{g(x)}\right) \in S(p,r)$. Define the homotopy $H: A\times [0,1] \to X $ by $$H(a,t) = \tau_p\left(a, \left(1-\frac r{g(a)}\right)t +\frac r {g(a)} \right).$$  $H$ is clearly continuous.  For any $t \in [0,1]$, $\left(1-\frac r{g(a)}\right)t +\frac r {g(a)} \ge \frac r{g(a)}$ so $H(a,t) \not \in B(p,r)$.

    We now show that $H(\cdot,0)$ is $2$-Lipschitz.  Let $x,y \in A$ with $g(x) \le g(y)$.  Let $\hat t =\frac r {g(x)}$ so that  $\hat x = \tau_p (x,\hat t) =H(x,0)\in S(p,r)$, and set $\hat y =\tau_p(y, \hat t)$.
    Thus $\hat y \not \in B(p,r)$ and $d( \hat y,\hat x) \le d(y,x)$ since $\tau_p$ is distance non-increasing. The image of $y$ under $H(\_, 0)$, $\tilde y$, lies on a geodesic from $\hat y$ to $p$.  It follows that $$d(\hat y, \tilde y) = d(\hat  y, S(p,r)) \le   d(\hat y, \hat x) \le d(y,x)$$ 
    Thus $d( \tilde y, \hat x) \le d(\tilde y, \hat y) + d(\hat y, \hat x) \le 2d(y,x)$ as required.

\end{proof}
\begin{Def}
 Let $Y$ be a locally compact Hausdorff space of finite cohomological dimension over $\Z$. If for some $n>0$, and for all $y\in Y$,  $H_i(Y, Y\setminus \{y\})=0$  for $i \neq n$, and $H_n(Y,Y\setminus \{y\})\cong \Z$ or $0$, then we say that $Y$ is a {\em homology $n$-manifold with boundary}. The boundary of $Y$, $\bd Y$ is defined by $\bd Y = \{ y\in Y:\, H_n(Y, Y\setminus \{y\}) =0 \}$.  If $\bd Y=\emptyset$ then we say $Y$ is a {\em homology $n$-manifold}.
\end{Def}
We need the following theorem of Mitchell:
\begin{Thm}\cite{MIT}\label{MIT}
    If $Y$ is a first countable homology $n$-manifold with boundary then either $\bd Y = \emptyset$ or $\bd Y$ is a homology $(n-1)$-manifold.
\end{Thm}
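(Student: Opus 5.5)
Theorem~\ref{MIT} is quoted from \cite{MIT} and the paper only uses it as a black box, so what follows is how I would reconstruct Mitchell's argument, not what the paper does. Throughout I work with sheaf-theoretic (Borel--Moore) homology with $\Z$ coefficients, in the style of Bredon. This homology agrees with the singular local homology in the definition for the spaces in question, and it behaves well under excision and closed unions.

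The first step is to show that $\bd Y$ is closed, equivalently that the interior $Y\setminus\bd Y$ is open. I would use the local homology sheaf $\mathcal H_n$, whose stalk at $y$ is $H_n(Y,Y\setminus\{y\})$. On a homology manifold with boundary this sheaf is locally constant with stalk $\Z$ on an open set. To see this, note that a local generator at an interior point $y$ is represented on a small neighbourhood $U$. Finite cohomological dimension makes $H_n(Y,Y\setminus \bar U)\to H_n(Y,Y\setminus\{z\})$ an isomorphism for $z$ near $y$, so the stalk is $\Z$ near $y$. Hence $\bd Y$ is closed and therefore locally compact. It also has cohomological dimension at most $\dim Y<\infty$, because it is a closed subspace.

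The second step reduces the local homology of $\bd Y$ to that of a space with no boundary. Form the double $D=Y\cup_{\bd Y}Y$. Excise around a point $y\in\bd Y$ and take the relative Mayer--Vietoris sequence of the pair of copies $Y_1,Y_2$, whose intersection is $\bd Y$:
\[
H_{i+1}(Y_1,Y_1\setminus y)\oplus H_{i+1}(Y_2,Y_2\setminus y)\to H_{i+1}(D,D\setminus y)\to H_i(\bd Y,\bd Y\setminus y)\to H_i(Y_1,Y_1\setminus y)\oplus H_i(Y_2,Y_2\setminus y).
\]
Because $y\in\bd Y$, all the outer terms vanish. This gives $H_i(\bd Y,\bd Y\setminus y)\cong H_{i+1}(D,D\setminus y)$.

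So it suffices to show that $D$ is a homology $n$-manifold at points of $\bd Y$. At points off $\bd Y$ this already holds, since there $D$ is locally a copy of $Y$ away from the boundary. Once that is established, $\bd Y$ has local homology $\Z$ in degree $n-1$ and $0$ otherwise at every point, and so $\bd Y$ is a homology $(n-1)$-manifold whenever it is nonempty.

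I expect the main obstacle to be proving that $D$ is a homology manifold at boundary points; as written the reduction above is nearly circular. My first attempt would use the sheaf $\mathcal H_n$ on $Y$ together with the fact that $\mathcal H_n$ vanishes on $\bd Y$. I would relate $H_*(Y,Y\setminus U)$ to $H_*(U,U\cap\bd Y)$ and use duality on the homology manifold $Y\setminus\bd Y$ to compute $H_n(D,D\setminus y)$ as a direct limit over neighbourhoods $U$ of $y$. This is where first countability enters. Choosing a countable neighbourhood basis $U_1\supset U_2\supset\cdots$ turns the limit into a sequential one. The $\varprojlim^1$ terms arising in the Borel--Moore/\v{C}ech comparison can then be controlled by a Mittag-Leffler argument. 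That control is what makes the stalks of the local homology sheaf of $\bd Y$ equal to the limits computed above, which in turn gives the vanishing in degrees other than $n$ and $\Z$ in degree $n$.
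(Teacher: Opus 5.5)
The paper gives no proof of this statement. It is quoted from Mitchell and used as a black box, so your reconstruction has to stand on its own, and as written it has a genuine gap at its core.

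\textbf{The doubling step is circular.} Your Mayer--Vietoris isomorphism $H_i(\bd Y,\bd Y\setminus\{y\})\cong H_{i+1}(D,D\setminus\{y\})$ is fine once $\bd Y$ is known to be closed. But it makes ``$D$ is a homology $n$-manifold at $y$'' literally equivalent to the desired conclusion at $y$. So doubling reduces the theorem to itself, and all of the content lands in your last paragraph, which lists tools rather than giving an argument.

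To see what actually has to be proved, put $M=Y\setminus\bd Y$. For open $U\ni y$, take the Borel--Moore sequence of the closed subset $U\cap\bd Y\subset U$ and pass to the direct limit over $U$, which is exact. Then use $H_*(Y,Y\setminus\{y\})=0$ and duality on the homology manifold $M$. This gives
\[
H_{i-1}(\bd Y,\bd Y\setminus\{y\})\cong\varinjlim_{U\ni y}H^{n-i}(U\cap M;\mathcal O_M).
\]
So you need the deleted neighbourhoods $U\cap M$ of a boundary point to have, in the limit, $H^0\cong\Z$ and vanishing cohomology in positive degrees, with coefficients in the orientation sheaf. This local acyclicity of the interior near $\bd Y$ is the substance of the theorem, and nothing in your proposal establishes it.

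Your use of first countability also does no work. A limit over a neighbourhood basis is a direct limit, which is exact whether or not the basis is countable. You never identify an inverse system carrying a $\varprojlim^1$ term, explain why it would be Mittag--Leffler, or show how that would yield $\Z$ in degree $n-1$ and $0$ elsewhere.

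\textbf{Your first step is not proved either.} Representing a generator of $H_n(Y,Y\setminus\{y\})$ by a class $\alpha\in H_n(Y,Y\setminus\bar U)$ is fine. But nothing you say prevents the image of $\alpha$ in $H_n(Y,Y\setminus\{z\})$ from being $0$ for $z$ near $y$, which is exactly what happens when $z$ is a boundary point. Finite cohomological dimension gives no such local constancy.

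Openness of the interior, equivalently closedness of $\bd Y$, is itself a nontrivial part of the theorem. It is also needed just to make sense of the conclusion, since otherwise $\bd Y$ need not be locally compact.

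Finally, you claim at the outset that sheaf-theoretic local homology agrees with the singular local homology in the definition. That needs local hypotheses, such as homological local connectedness (HLC), which are not among the assumptions.
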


\begin{Thm}\label{T:homol}
    If a metric $n$-manifold $X$ has a convex geodesic bicombing, then for any $p \in X$ and $r>0$
    $S(p,r)$ is a homology $(n-1)$-manifold.
\end{Thm}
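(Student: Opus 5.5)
The plan is to realize $S(p,r)$ as the boundary of a homology $n$-manifold with boundary and then invoke Mitchell's Theorem~\ref{MIT}. Set $Y=\bar B(p,r)$. Because $X$ is a metric $n$-manifold, $Y$ is locally compact, Hausdorff, first countable, and of finite cohomological dimension, as a closed subset of an $n$-manifold. For interior points $y\in B(p,r)$, $B(p,r)$ is an open neighbourhood of $y$ in both $Y$ and $X$. Excision then gives $H_i(Y,Y\setminus\{y\})\cong H_i(X,X\setminus\{y\})$, which is $\Z$ for $i=n$ and $0$ otherwise. It remains to show $H_i(Y,Y\setminus\{y\})=0$ for all $i$ when $y\in S(p,r)$. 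Once that is done, $Y$ is a homology $n$-manifold with boundary, $\bd Y=S(p,r)\neq\emptyset$, and Theorem~\ref{MIT} shows that $S(p,r)$ is a homology $(n-1)$-manifold.

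To get the vanishing at a sphere point $y$, I will use the contraction $\tau_p$ from Lemma~\ref{C:cont}. By excision, $H_*(Y,Y\setminus\{y\})\cong H_*(U\cap Y,(U\cap Y)\setminus\{y\})$ for a small open neighbourhood $U$ of $y$ in $X$, say a chart ball. It suffices to show that $Y\setminus\{y\}$ is contractible, or at least acyclic. The long exact sequence, together with the contractibility of $Y$ (Lemma~\ref{C:cont} with $A=Y$), then gives $H_*(Y,Y\setminus\{y\})=0$ in all degrees, with reduced homology in degree $0$. Lemma~\ref{C:cont} applies to $A=Y\setminus\{y\}$, since $B(p,r)\subset A\subset\bar B(p,r)$. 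The question is whether the contraction $\tau_p$ keeps $A$ inside $A$, i.e.\ never passes through $y$. For $t<1$, $\tau_p(x,t)\in B(p,r)$, so it avoids $y$. At $t=1$, $\tau_p(x,1)=x\neq y$. Hence $\tau_p$ restricts to a contraction of $Y\setminus\{y\}$ to $p$. So $Y$ and $Y\setminus\{y\}$ are both contractible, and the relative homology vanishes.

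Care is needed with the definition's requirement that the ambient homology be of the \emph{local} kind. Homology of the pair $(Y,Y\setminus\{y\})$ is exactly what the definition asks for, so no localization is needed; I will nonetheless double-check that the singular homology used by Mitchell matches, or use sheaf/Čech versions, which agree here since $Y$ is locally contractible. Local contractibility holds at interior points because they are manifold points. At boundary points I may need it only for identifying theories, and may appeal to the fact that homology manifolds in Mitchell's sense are defined with sheaf-theoretic local homology. I expect the main obstacle to be this technical compatibility: checking that $Y$ satisfies the hypotheses of Theorem~\ref{MIT}, namely finite cohomological dimension and the correct homology theory, and that $\bd Y$ is exactly $S(p,r)$. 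The latter also needs the remark that $S(p,r)\neq\emptyset$, which holds for $r>0$ because $X$ is noncompact, being a contractible manifold of positive dimension, and geodesic, so $d(p,\cdot)$ takes every value in $[0,\infty)$. The geometric content of the proof is just the observation that $\tau_p$ pushes everything strictly inside the open ball before time $1$.
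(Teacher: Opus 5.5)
Your proposal is correct and follows essentially the same route as the paper. It uses $\tau_p$ to contract both $\bar B(p,r)$ and $\bar B(p,r)\setminus\{y\}$ at sphere points, excision at interior points, and then Theorem~\ref{MIT}. One small correction: you do not need $S(p,r)\neq\emptyset$, since if it is empty the conclusion holds vacuously (this is the first alternative in Theorem~\ref{MIT}), and your justification of nonemptiness fails without properness --- the open unit ball in $\R^n$ with its straight-line bicombing is a noncompact geodesic manifold on which $d(p,\cdot)$ is bounded, so $S(p,r)=\emptyset$ for large $r$.
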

\begin{proof}
We first show that $\bar B(p,r)$ is a homology $n$-manifold with boundary $S(p,r)$. Since $X$ is an $n$-dimensional metrizable manifold and $\bar B(p,r)$ is closed in $X$, the covering dimension of $\bar B(p,r)$ is at most $n$; in particular it has finite cohomological dimension over $\Z$. If $y \in S(p,r)$ then by Lemma \ref{C:cont}, $\bar B(p,r) \setminus \{y\}$ is contractible, as is $\bar B(p,r)$.  
Thus $H_i (\bar B(p,r), \bar B(p,r)\setminus \{y\}) = 0$ for all $i$ by the long exact sequence of a pair. 

For $y \in B(p,r)$, by excision $$H_i(\bar B(p,r),\bar B(p,r)\setminus\{y\})\cong H_i(X, X\setminus \{y\}).$$  Since $X$ is an $n$-manifold, it is a homology $n$-manifold. Hence $H_i(\bar B(p,r),\bar B(p,r)\setminus\{y\})=0$ for $i \neq n$, while this group is isomorphic to $\Z$ when $i=n$.  We have shown that $\bar B(p,r)$  is a homology $n$-manifold with boundary $S(p,r)$, and so by Theorem \ref{MIT}  $S(p,r)$ is a homology $(n-1)$
manifold.  \end{proof}
\begin{Cor} \label{C:sphere}
Let $X$ be a 3-manifold with a proper metric. If $X$ has a convex geodesic bicombing, then $S(p,r)$ is a 2-sphere for each $p\in X$ and $r>0$.
\end{Cor}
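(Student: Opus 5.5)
By Theorem \ref{T:homol}, $S(p,r)$ is a homology $2$-manifold. It is compact because $\bar B(p,r)$ is compact (the metric is proper) and $S(p,r)$ is closed in it. It is nonempty: $X$ is connected and unbounded, or at least contains points at distance $\ge r$ from $p$. If no point lies at distance $\ge r$, then $\bar B(p,r)=X$ and $S(p,r)=\emptyset$, and I would need to rule this out separately. A noncompact manifold is unbounded under a proper metric, and a closed $3$-manifold is not contractible, while $X$ is contractible by the bicombing. Hence $X$ is noncompact, so it has points at every distance from $p$ by connectedness and continuity of $d(p,\cdot)$.

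I would then use the classical fact that homology $2$-manifolds (locally compact, metrizable, finite dimensional) are genuine topological $2$-manifolds, a theorem of Wilder in dimension $2$. So $S(p,r)$ is a closed surface, possibly disconnected. It remains to show that it is connected and simply connected, or equivalently has the integral homology of $S^2$.

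For the homology, I would use that $\bar B(p,r)$ is a compact contractible homology $3$-manifold with boundary $S(p,r)$, as established in the proof of Theorem \ref{T:homol}. The plan is to apply Lefschetz duality for homology manifolds with boundary:
\[
H_i(\bar B, S)\cong H^{3-i}(\bar B)\quad (\text{\v{C}ech cohomology}).
\]
Contractibility of $\bar B$ (Lemma \ref{C:cont}) makes the right side equal to that of a point. The long exact sequence of the pair $(\bar B,S)$ then gives $\tilde H_i(S)\cong H_{i+1}(\bar B,S)\cong H^{2-i}(\bar B)$. This yields $H_0(S)=\Z$, $H_1(S)=0$ and $H_2(S)=\Z$. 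Among closed surfaces, the only one with these homology groups is $S^2$.

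Alternatively, avoiding duality on a possibly wild boundary, I would work inside the honest manifold $X$. The set $U=X\setminus \bar B(p,r)$ deformation retracts within $X\setminus B(p,r)$ onto $S$ via Lemma \ref{L:lip}. Thus $X\setminus B(p,r)$ is homotopy equivalent to $S$. Alexander duality for the compact contractible set $\bar B$ in the open contractible manifold, or a Mayer--Vietoris argument, would then give the same conclusion.

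The main obstacle is the duality step: justifying Lefschetz or Alexander duality when $S(p,r)$ is not known a priori to be collared or tame. I would first try the duality theory for homology manifolds with boundary, which is valid in the generality of Mitchell's framework using \v{C}ech or sheaf cohomology, since compact ANR-type issues coincide here once $S$ is known to be a surface. The upgrade from homology $2$-manifold to surface is classical. I would cite it, noting that it relies on $S$ being metrizable and locally compact, both of which are immediate.
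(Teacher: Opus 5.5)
Your proof is correct and follows the paper's argument. Theorem \ref{T:homol} plus the Moore--Wilder theorem make $S(p,r)$ a compact surface. Poincar\'e--Lefschetz duality for the compact contractible homology $3$-manifold $\bar B(p,r)$, together with the long exact sequence of the pair, then gives $S(p,r)$ the homology of $S^2$. Duality with $\Z$ coefficients needs $\bar B(p,r)$ to be orientable, which holds because it is contractible; the paper says this and you should too.

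Your explicit argument that $S(p,r)\neq\emptyset$ fills in a point the paper leaves implicit. The worry about a wild boundary is harmless here: by convexity, $(x,t)\mapsto\sigma_{yx}(t)$ contracts $\bar B(p,r)\cap B(y,\epsilon)$ within itself, so $\bar B(p,r)$ is a locally contractible compactum and singular and \v{C}ech theories agree. That also makes your Alexander-duality alternative unnecessary.
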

\begin{proof}
By Theorem \ref{T:homol}, $S(p,r)$ is a homology 2-manifold. By a theorem of Moore \cite{WILDER}, every homology 2-manifold is a 2-manifold. Thus $S(p,r)$ is a compact 2-manifold.

By the proof of Theorem \ref{T:homol}, $\bar B(p,r)$ is a compact contractible homology 3-manifold with boundary $S(p,r)$. Since $\bar B(p,r)$ is contractible, it is orientable as a homology manifold. Poincar\'e--Lefschetz duality for homology manifolds (see, for example, \cite[Chapter~VIII]{WILDER}) gives
\[
H_i(\bar B(p,r),S(p,r))\cong H^{3-i}(\bar B(p,r)).
\]
Consequently the long exact sequence of the pair $(\bar B(p,r),S(p,r))$ shows that $S(p,r)$ has the same homology as $S^2$. It follows from the classification of compact surfaces that $S(p,r)$ is homeomorphic to $S^2$.
\end{proof}
\begin{Lem}
For $X$ a metric space with a convex geodesic bicombing, fix $p\in X$ and $0<s<r$. The contraction $\tau_p$ induces a homotopy of $S(p,r)$ into $S(p,s)$. If the metric is proper and $X$ is a 3-manifold, the induced map
\[
f:S(p,r)\longrightarrow S(p,s),\qquad f(x)=\tau_p\left(x,\frac{s}{r}\right),
\]
has degree $\pm1$ and in particular is onto.
\end{Lem}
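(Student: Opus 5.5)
Write $B_r=\bar B(p,r)$ and $S_r=S(p,r)$. The homotopy part follows from Lemma~\ref{L:lip}, or directly from constant speed. For $x\in S_r$ and $t\in[s/r,1]$, the point $\tau_p(x,t)$ lies on the geodesic $\sigma_{px}$ at distance $tr$ from $p$. So $H(x,u)=\tau_p(x,(1-u)\tfrac sr+u)$ is a homotopy in $\bar B(p,r)\setminus B(p,s)$ from $f$ to the inclusion, and $f$ does land in $S_s$.

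For the degree, I would use relative homology of the closed annulus $A=\bar B(p,r)\setminus B(p,s)$ together with the balls. By Corollary~\ref{C:sphere} both $S_r$ and $S_s$ are $2$-spheres. By the proof of Theorem~\ref{T:homol}, $B_r$ and $B_s$ are compact contractible homology $3$-manifolds with boundaries $S_r$ and $S_s$. The long exact sequences then give isomorphisms
\[
\partial_r: H_3(B_r,S_r)\to H_2(S_r)\cong\Z,
\qquad
\partial_s: H_3(B_s,S_s)\to H_2(S_s)\cong\Z .
\]
The key step is to show that the inclusion-induced map
\[
H_2(S_s)\to H_2(B_r\setminus\{p\})
\]
is an isomorphism, and likewise for $S_r$. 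Here $B_r\setminus\{p\}$ deformation retracts onto $S_s$ via radial projection: by Lemma~\ref{L:lip}, points outside $B(p,s)$ are pushed to $S_s$. Points inside $B(p,s)\setminus\{p\}$ should be pushed outward, but the bicombing cannot extend geodesics. So instead I would compute $H_2(B_r\setminus\{p\})$ directly. Excision and contractibility of $B_r$ give
\[
H_2(B_r\setminus\{p\})\cong H_3(B_r,B_r\setminus\{p\})\cong H_3(X,X\setminus\{p\})\cong\Z .
\]
The same computation applies to $B_s\setminus\{p\}$.

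I would then argue as follows.
\begin{itemize}
\item The inclusion $S_r\hookrightarrow B_r\setminus\{p\}$ is an isomorphism on $H_2$. The composite $H_2(S_r)\to H_2(B_r\setminus\{p\})\cong H_3(B_r,B_r\setminus\{p\})$ is the inverse boundary map followed by the map $H_3(B_r,S_r)\to H_3(B_r,B_r\setminus\{p\})$. This last map is an isomorphism by Poincar\'e--Lefschetz duality (both groups are ``fundamental class'' groups), or by the local orientation restriction from the fundamental class of the orientable homology manifold $B_r$.
\item The same holds for $S_s\hookrightarrow B_s\setminus\{p\}$.
\item The inclusion $B_s\setminus\{p\}\hookrightarrow B_r\setminus\{p\}$ is an isomorphism on $H_2$, since both groups are identified with the local homology at $p$ compatibly, by excision.
\end{itemize}
Since $f\simeq$ inclusion inside $B_r\setminus\{p\}$, we get $\iota_s\circ f_*=\iota_r$ on $H_2$, where $\iota_s$ and $\iota_r$ are the two inclusion maps above. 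Both are isomorphisms $\Z\to\Z$, so $f_*=\iota_s^{-1}\iota_r=\pm1$. Ontoness follows because a map $S^2\to S^2$ missing a point factors through $S^2\setminus\{pt\}$, which is contractible, and so has degree $0$.

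The main obstacle is the first bullet: identifying $H_3(B_r,S_r)\to H_3(B_r,B_r\setminus\{p\})$ as an isomorphism. I would try first to prove it via the fundamental class of the compact orientable homology manifold with boundary, whose restriction to each interior local homology group is a generator (Wilder's duality, as already cited in Corollary~\ref{C:sphere}). Failing that, Lefschetz duality $H_3(B_r,S_r)\cong H^0(B_r)=\Z$, natural with respect to restriction to a neighborhood of $p$, should give the same conclusion.
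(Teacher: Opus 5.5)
Your proposal is correct and follows essentially the paper's argument: the boundary isomorphisms for the contractible pairs, the localization of the fundamental class $H_3(B_u,S_u)\to H_3(B_u,B_u\setminus\{p\})$ (the paper cites Wilder for this), excision at $p$, and the radial homotopy giving $i_r\simeq i_s\circ f$. The only cosmetic difference is that you compare both spheres inside $B_r\setminus\{p\}$, which costs you the extra excision step $B_s\setminus\{p\}\hookrightarrow B_r\setminus\{p\}$. The paper instead maps both spheres directly into $X\setminus\{p\}$, using contractibility of $X$.
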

\begin{proof}
Since each geodesic $\sigma_{pq}$ has constant speed, for $x\in S(p,r)$ one has $\tau_p(x,s/r)\in S(p,s)$. Thus the restriction of $\tau_p$ to $S(p,r)\times[s/r,1]$ gives the required homotopy.

Suppose now that the metric is proper and $X$ is a 3-manifold. For $u>0$, put
\[
B_u=\bar B(p,u),\qquad S_u=S(p,u),
\]
and let $i_u:S_u\hookrightarrow X\setminus\{p\}$ be the inclusion. By Corollary \ref{C:sphere}, $S_u$ is a 2-sphere. Poincar\'e--Lefschetz duality and the contractibility of $B_u$ imply that the boundary homomorphism
\[
H_3(B_u,S_u)\longrightarrow H_2(S_u)
\]
is an isomorphism. On the other hand, since $X$ is contractible and is a 3-manifold, the boundary homomorphism
\[
H_3(X,X\setminus\{p\})\longrightarrow H_2(X\setminus\{p\})
\]
is also an isomorphism. Consider the maps of pairs
\[
(B_u,S_u)\longrightarrow (B_u,B_u\setminus\{p\})
\longrightarrow (X,X\setminus\{p\}).
\]
The first map sends the relative fundamental class of the orientable homology manifold $(B_u,S_u)$ to the local orientation class at $p$, and therefore induces an isomorphism on $H_3$; this is the usual localization property of the fundamental class, see \cite[Chapter~VIII]{WILDER}. Since $p$ lies in the interior $B(p,u)$ of $B_u$, the second map induces the standard excision isomorphism on local homology. Thus the composite
\[
H_3(B_u,S_u)\longrightarrow H_3(X,X\setminus\{p\})
\]
is an isomorphism. By naturality of the boundary homomorphism, it follows that
\[
(i_u)_*:H_2(S_u)\longrightarrow H_2(X\setminus\{p\})
\]
is an isomorphism.

The radial homotopy from $S(p,r)$ to $S(p,s)$ takes place in $X\setminus\{p\}$. Hence
\[
i_r\simeq i_s\circ f,
\]
and therefore
\[
(i_r)_*=(i_s)_*\circ f_*.
\]
Since both $(i_r)_*$ and $(i_s)_*$ are isomorphisms, so is
\[
f_*:H_2(S(p,r))\longrightarrow H_2(S(p,s)).
\]
Thus $f$ has degree $\pm1$, and therefore is onto.
\end{proof}

\begin{Thm}
    Let $X$ be a proper metric space which admits a convex geodesic bicombing $\sigma$, and $p \in X$.  Suppose that for some $0<s<r$ the contraction $\tau_p$ induces a homotopy of $S(p,r)$ {\bf onto} $S(p,s)$.  Let $S =S(p,s)$ with the subspace topology, and $\hat S=S(p,s)$ with the induced path metric.  Then the identity function is a homeomorphism from $S$ to $\hat S$.
\end{Thm}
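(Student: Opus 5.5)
Since the path metric $\hat d$ on $S=S(p,s)$ dominates $d$, the identity $\hat S\to S$ is continuous (indeed $1$-Lipschitz). So the whole content of the statement is the reverse direction. We must show that if $x_n\to x$ in $S$ (in the metric $d$), then $\hat d(x_n,x)\to 0$. Equivalently, for every $\epsilon>0$ there is $\delta>0$ such that any two points of $S$ with $d(x,y)<\delta$ can be joined by a path inside $S$ of length at most $\epsilon$. Because $S$ is compact (the metric is proper), it suffices to prove this local statement uniformly, or even just pointwise at each $x\in S$.

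The path will be produced by radially projecting a geodesic. Fix $x,y\in S$ with $d(x,y)<\delta$. Let $\gamma=\sigma_{xy}$. Since $d(p,\gamma(t))\le s$ by convexity, $\gamma$ lies in $\bar B(p,s)$. We need to push it back out onto $S$ with controlled length, and this is where the surjectivity hypothesis enters. By hypothesis the map $f(z)=\tau_p(z,s/r)$ from $S(p,r)$ onto $S$ is onto. Choose $x',y'\in S(p,r)$ with $f(x')=x$ and $f(y')=y$.

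The difficulty is that $x'$ and $y'$ need not be close even when $x$ and $y$ are. So I would instead argue by contradiction and compactness. Suppose $x_n\to x$ in $S$ but $\hat d(x_n,x)\ge\epsilon$. Pick preimages $x_n'\in S(p,r)$ and pass to a subsequence with $x_n'\to x''\in S(p,r)$. Then $f(x'')=x$ by continuity. Now take the point $z=f(x'')$ together with the geodesic $\sigma_{x_n' x''}$ in $X$, which lies in $\bar B(p,r)$. The aim is to map it into $X\setminus B(p,s)$ and then radially onto $S$ using the $2$-Lipschitz map $H(\cdot,0)$ of Lemma~\ref{L:lip}. That produces a path in $S$ from $x_n$ to $x$ of length at most $2\,d(x_n',x'')\to0$.

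To keep the geodesic $\sigma_{x_n'x''}$ outside $B(p,s)$, I use convexity: $d(p,\sigma_{x_n'x''}(t))\ge r-\tfrac12 d(x_n',x'')$, by comparing with the constant geodesic at $p$ and applying the triangle inequality. More precisely, for $m$ the midpoint, $d(p,m)\ge r-d(m,x'')$ and $d(m,x'')\le d(x_n',x'')$, which is small. So once $d(x_n',x'')<r-s$ the whole geodesic lies in $X\setminus B(p,s)$. Apply $H(\cdot,0)$, which is $2$-Lipschitz into $S(p,s)$ and sends $x_n'\mapsto x_n$ and $x''\mapsto x$. Here I must check that the radial projection of points of $S(p,r)$ agrees with $f$, which holds by constant speed. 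The image path has length at most $2\,d(x_n',x'')\to 0$, giving $\hat d(x_n,x)\to0$, a contradiction.

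The main obstacle, and the place needing care, is lifting a convergent sequence in $S$ to a convergent sequence of preimages. Compactness of $S(p,r)$, from properness, gives this after passing to a subsequence, and a subsequence suffices for the contradiction. The geodesic-avoidance estimate is the other point to verify carefully. Finally, the identity is a continuous bijection from the compact space $\hat S$? Rather: $S$ is compact and $\hat S$ is Hausdorff, so the continuity of $S\to\hat S$ established above already makes it a homeomorphism.
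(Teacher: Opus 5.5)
Your proposal is correct and follows essentially the same route as the paper. You lift the bad sequence to $S(p,r)$ using surjectivity, pass to a convergent subsequence of lifts by compactness, and radially project the short bicombing geodesic between lifts (which stays outside $B(p,s)$) via the $2$-Lipschitz map of Lemma~\ref{L:lip} to get a path in $S(p,s)$ of length at most $2d(x_n',x'')$; the paper packages this same argument as the inclusion $\rho(B(b,u))\subset B_\ell(a,2u)$.
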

\begin{proof}
Let $d_\ell$ be the path metric on $S(p,s)$. For each $a \in S(p,s)$, let $B_\ell(a,t)=\{b\in S(p,s):d_\ell(a,b)<t\}$ be the ball of radius $t$ in the path metric on $S(p,s)$.
Clearly $B_\ell(a,t) \subset B(a,t)$ in $S$. It remains only to show that, for each $a\in S(p,s)$ and each $t>0$, the point $a$ is an interior point of $B_\ell(a,t)$ in $S$.

Suppose not. Then there is $a \in S(p,s)$ and $t>0$ and a sequence $(a_k) \subset S \setminus B_\ell(a,t)$ with $a_k \to a$ in $S$. Let $f:S(p,r) \to S(p,s)$ be the surjective map defined by $\tau_p$, and define the radial map
\[
\rho:X\setminus B(p,s)\longrightarrow S(p,s),\qquad
\rho(x)=\tau_p\left(x,\frac{s}{d(p,x)}\right).
\]
Thus $f=\rho|_{S(p,r)}$. For each $k$, choose $b_k \in S(p,r)$ with $f(b_k) =a_k$.  By compactness of $S(p,r)$, passing to a subsequence we may assume that $b_k \to b\in S(p,r)$.  By continuity of $f$, $f(b) =a$.  For $u < \min\{ \frac t 2, r-s\}$,  the metric ball of $X$ $B(b, u) \subset X\setminus \bar B(p,s)$.  If $x\in B(b,u)$, then the geodesic $\sigma_{bx}$ has length $d(b,x)<u$ and is contained in $X\setminus\bar B(p,s)$, since $d(p,b)=r$ and $u<r-s$. By Lemma \ref{L:lip}, the radial map $\rho$ sends $\sigma_{bx}$ to a path in $S(p,s)$ of length at most $2d(b,x)$. Hence
\[
d_\ell(a,\rho(x))=d_\ell(\rho(b),\rho(x))\leq 2d(b,x)<2u.
\]
Thus $\rho(B(b,u))\subset B_\ell(a,2u)$.  For all sufficiently large $k$ one has $b_k \in B(b,u)$ and so $a_k=f(b_k)=\rho(b_k)\in B_\ell(a,2u) \subset B_\ell(a,t)$, which is a contradiction.  
We have shown that $a$ is an interior point of $B_\ell(a,t)$ in $S$, and it now follows that the identity function is a homeomorphism.  

\end{proof}
The following corollary follows directly from the previous two results.
\begin{Cor} \label{C:gsph}
If $X$ is a 3-manifold with a proper metric admitting a convex geodesic bicombing, then any metric sphere $S$ in $X$ is a 2-sphere and the path metric on $S$ induces its original topology. In particular, with the path metric, $S$ is a compact length space and hence a geodesic metric space by the Hopf--Rinow theorem for length spaces; see, for example, \cite[Chapter~I.3]{BRI-HAE}.
\end{Cor}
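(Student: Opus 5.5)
The corollary is assembled from Corollary \ref{C:sphere}, the degree lemma, and the preceding theorem on path metrics. Fix a metric sphere $S=S(p,s)$ with $s>0$.

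First, Corollary \ref{C:sphere} already shows that $S$ is a topological $2$-sphere. In particular $S$ is compact, being a closed and bounded subset of the proper space $X$.

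Next, choose any $r>s$. The lemma just before the theorem shows that $f(x)=\tau_p(x,s/r)$, viewed as a map $S(p,r)\to S(p,s)$, has degree $\pm1$. Hence $f$ is onto. This is exactly the hypothesis of the preceding theorem: the homotopy of $S(p,r)$ induced by $\tau_p$ ends with a map onto $S(p,s)$. That theorem then says the identity from $S$ with the subspace topology to $\hat S$ with the path metric $d_\ell$ is a homeomorphism. One point needs checking, namely that $d_\ell$ is finite, so that $\hat S$ is a genuine metric space. The proof of the theorem handles this locally: every point of $S$ has a neighbourhood of points at finite $d_\ell$-distance, via the bound $d_\ell(\rho(b),\rho(x))\le 2d(b,x)$ from Lemma \ref{L:lip}. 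Since $S$ is a sphere, it is connected, so the set of points at finite $d_\ell$-distance from a given point is open and closed, and therefore all of $S$. Thus $d_\ell$ is a finite metric inducing the original topology.

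Finally, the compactness of $(S,d_\ell)$ follows from the compactness of $S$ under the homeomorphism. The path metric is intrinsic: the induced length of a path is the same whether it is measured with $d$ or with $d_\ell$, because $d\le d_\ell$ and $d_\ell$-distances between nearby points on a path are bounded by the $d$-lengths of subarcs. So $(S,d_\ell)$ is a complete, locally compact length space. The Hopf--Rinow theorem for length spaces \cite[Chapter~I.3]{BRI-HAE} then gives that it is geodesic.

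The only step needing some care is this last verification that $d_\ell$ is a length metric, i.e.\ that measuring lengths with $d$ or with $d_\ell$ gives the same answer. This is a standard fact about induced path metrics, and I would simply cite it.
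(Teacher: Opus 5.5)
Your proposal is correct and follows the paper's own route: Corollary~\ref{C:sphere} gives that $S$ is a $2$-sphere, the degree lemma makes $f:S(p,r)\to S(p,s)$ onto, the preceding theorem gives the homeomorphism, and compactness plus Hopf--Rinow finishes the argument. You also supply two details the paper leaves implicit, and both are handled correctly: finiteness of $d_\ell$ (local finiteness from the $2$-Lipschitz bound plus connectedness of $S$), and the fact that $d_\ell$ is a length metric.
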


\section{Assouad--Nagata dimension of combable spaces}

\begin{Def}

Let $(X,d)$ be a metric space.

A collection $\mathcal U$ of subsets of $X$ is called $M$-bounded if
\[
\operatorname{diam}(U)\leq M
\]
for every $U\in\mathcal U$.

For $s>0$, a collection $\mathcal U$ is called $s$-disjoint if
\[
d(U,V):=\inf\{d(x,y):x\in U,\ y\in V\}\geq s
\]
whenever $U,V\in\mathcal U$ are distinct.

More generally, for an integer $m\geq 1$, a collection $\mathcal U$ is
called $(m,s)$-disjoint if it can be written as
\[
\mathcal U=\bigcup_{i=1}^{m}\mathcal U^i,
\]
where each subcollection $\mathcal U^i$ is $s$-disjoint.

We will often refer to the subcollections
\[
\mathcal U^1,\ldots,\mathcal U^m
\]
as the \emph{colours} of the cover, and say that a set
$U\in\mathcal U^i$ has colour $i$.

The \emph{Assouad--Nagata dimension} of $X$, denoted by
$\dim_{\mathrm{AN}}(X)$, is the least integer $n\geq 0$ for which there
exists a constant $c>0$ such that, for every $s>0$, the space $X$
admits an $(n+1,s)$-disjoint, $cs$-bounded cover.

The \emph{asymptotic Assouad--Nagata dimension} of $X$, denoted by
$\operatorname{asdim}_{\mathrm{AN}}(X)$, is the least integer $n\geq 0$
for which there exist constants $c>0$ and $s_0>0$ such that, for every
$s\geq s_0$, the space $X$ admits an $(n+1,s)$-disjoint,
$cs$-bounded cover.

Equivalently, $\operatorname{asdim}_{\mathrm{AN}}(X)\leq n$ if and
only if there exist constants $c,b\geq 0$ such that, for every $s>0$,
the space $X$ admits an $(n+1,s)$-disjoint, $(cs+b)$-bounded cover.
\end{Def}

For $p\in X$ and $r,s>0$, put
\[
A(p;r,s):=\{x\in X:r\leq d(p,x)<r+s\}.
\]

We will need the following result, adapted from \cite{JL}.

\begin{Thm}
    
\label{thm:annulus}
Let $(X,d)$ be a metric space and let $p\in X$. Suppose that there
exist an integer $n\geq 1$ and a constant $c>0$ such that, for every
$r\geq 0$ and every $s>0$, the annulus
\[
A(p;r,s)
\]
admits an $(n+1,s)$-disjoint, $cs$-bounded cover. Then
\[
\dim_{\mathrm{AN}}(X)\leq n+1.
\]
More precisely, there exists a constant $C=C(n,c)$ such that, for
every $s>0$, the space $X$ admits an $(n+2,s)$-disjoint,
$Cs$-bounded cover.

If the same hypothesis holds only for $s\geq s_0$, for some $s_0>0$,
then
\[
\operatorname{asdim}_{\mathrm{AN}}(X)\leq n+1.
\]
\end{Thm}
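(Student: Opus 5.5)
Fix $s>0$; we build an $(n+2,s)$-disjoint, $Cs$-bounded cover of $X$ by slicing $X$ into concentric annuli of width proportional to $s$. Choose a large integer $L$ (depending on $n,c$ only; say $L=2(n+3)$ or similar) and put $w=Ls$. The shells $A_j:=A(p;jw,w)$, $j\geq 0$, partition $X$. We treat the even and the odd shells differently. Shells with $j$ even are at mutual distance at least $w\ge s$ whenever they are distinct. By hypothesis (applied with parameter $w$, or more economically with parameter $s$ on each even shell) each even shell $A_j$ admits an $(n+1,s)$-disjoint, $cw$-bounded cover $\mathcal U_j=\mathcal U_j^1\cup\dots\cup\mathcal U_j^{n+1}$. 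Taking $\mathcal U^i=\bigcup_{j\ \mathrm{even}}\mathcal U_j^i$ gives $n+1$ colours that are $s$-disjoint on the even shells, because sets from different even shells are at distance $\ge w\ge s$. Here we use that $d(x,y)\ge |d(p,x)-d(p,y)|$.

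The odd shells must now be absorbed using only one extra colour, and this is the main obstacle. A naive approach would give each odd shell its own cover, but that costs $n+1$ further colours. Following the Hurewicz-type trick of \cite{JL}, I would instead shrink the even pieces and let the extra colour handle thin "collars". Concretely, apply the hypothesis to the thickened shell $A(p;jw-w/2,2w)$ for even $j$, rather than to $A_j$ alone. For each colour $i$, enlarge nothing, but cut: the sets of colour $i$ cover most of the odd shell adjacent to it. Use a staggered choice of radii. In the odd shell $A_{j}$ ($j$ odd), pick for each colour $i=1,\dots,n+1$ a sub-band $R_{j,i}=\{x: jw+ (2i-1)s' \le d(p,x)< jw+2is'\}$ with $s'=w/(2n+4)\ge 2s$ if $L$ is chosen large. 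Extend the colour-$i$ sets of the neighbouring even shells outward and inward only up to the band $R_{j,i}$, then cut. Sets of colour $i$ coming from shell $j-1$ and from shell $j+1$ are then separated by the band $R_{j,i}$, of width $\ge s$. Note that the thickened-shell cover is $(n+1,s)$-disjoint, so same-colour sets within one shell stay $s$-apart after intersecting with a radial range.

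The one point left uncovered in this scheme is where different colours' cutting radii disagree. Each point of an odd shell near a band $R_{j,i}$ is not covered by colour $i$, but it may fail to be covered by some other colour as well. This needs more care, so the cleaner route I would actually carry out is the following. The new colour $n+2$ consists of the sets $U\cap R_{j}$, where $R_j$ is a single middle band of width $\ge s$ in odd shell $j$ and $U$ runs over a $(n+1,s)$-disjoint cover of the middle band. These are grouped so that they are $s$-disjoint: bands in different odd shells are far apart, while within one band we need a $1$-dimensional-in-radius structure. So instead I would take colour $n+2$ to be the bands' pieces only for one colour class at a time, rotating which of the $n+1$ colours is cut in which band. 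The key estimate to check is that every point is covered and that each colour remains $s$-disjoint, with diameters bounded by $c\cdot 2w+2w=O((c+1)L s)$, giving $C=C(n,c)$. The asymptotic statement follows verbatim for $s\ge s_0$, since all hypotheses are invoked only at parameters $\ge s$.
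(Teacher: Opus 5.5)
Your proposal stops at the step that carries the whole theorem: you never actually absorb the odd shells using a single extra colour.

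\textbf{Why your two schemes fail.} The staggered-band scheme fails for the reason you note yourself. An $(n+1,s)$-disjoint cover only guarantees that each point lies in \emph{some} set. Because the covers coming from the even shells $j-1$ and $j+1$ are independent, consider a point of the odd shell $A_j$ whose colours in the inner cover all have their bands $R_{j,i}$ at or below it, and whose colours in the outer cover all have their bands at or above it. That point is covered by nothing after the cuts. Moreover, with your thickening $A(p;jw-w/2,2w)$, the inner cover does not even reach the outer half of $A_j$, where some of the bands lie. The ``cleaner route'' does not repair this. The pieces $U\cap R_j$ of an $(n+1,s)$-disjoint cover of a band are not $s$-disjoint once all of them receive colour $n+2$. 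Also, ``rotating which colour is cut in which band'' does not say how the points of the band missed by the chosen colour class get covered without clashing with the extended even-shell sets of the other colours. So coverage, $s$-disjointness and the diameter bound $O((c+1)Ls)$ are asserted rather than established.

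\textbf{The missing idea.} The paper spends the extra colour on making every point doubly covered, and then cuts a \emph{single} cover of each odd annulus rather than two independent ones. Set $f=d(p,\cdot)$, $t=(n+2)s$ and $I_k=[kt,(k+1)t)$.

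First, for $k$ odd, take an $(n+1,t)$-disjoint, $ct$-bounded cover of $f^{-1}(I_k)$. Replace each set by its closed $t/3$-neighbourhood, and add as colour $n+2$ the part of each original set lying outside the enlarged sets of all other colours. The result is $(n+2,t/3)$-disjoint and $(c+1)t$-bounded, and every point lies in sets of two different colours.

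Second, split $I_k$ into consecutive intervals $J_{k,1},\dots,J_{k,n+2}$ of length $s$. Cut every colour-$i$ set along $f^{-1}(J_{k,i})$, keeping both sides. A point lies in only one band, so it loses at most one of its two colours and stays covered. Each piece has $f$-image in one component of $I_k\setminus J_{k,i}$, so it comes within $s$ of at most one of the two adjacent even annuli.

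Third, cover the even annuli with $n+1$ colours at the larger scale $\sigma=(A+2)s$, where $A=(n+2)(c+1)$ bounds the diameters of the odd pieces in units of $s$. Adjoin each odd piece of colour $i\le n+1$ to the colour-$i$ even set within distance $s$ of it, if there is one. Such a set is unique: within one even annulus by the choice of $\sigma$, and across the two adjacent annuli by the cut. This is what gives $s$-disjointness. The colour-$(n+2)$ pieces are left as they are.
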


\begin{proof}
We give the proof simultaneously in the two cases. Thus, fix $s>0$ in
the Assouad--Nagata case, and fix $s\geq s_0$ in the asymptotic case.
Put
\[
f(x):=d(p,x).
\]
Thus $f:X\to[0,\infty)$ is $1$-Lipschitz.

We first recall the following elementary covering observation. Suppose
that a metric space $Y$ admits an $(n+1,a)$-disjoint,
$D$-bounded cover, where $n\geq 1$. Then $Y$ admits an
\[
\left(n+2,\frac a3\right)\text{-disjoint},
\qquad
\left(D+\frac{2a}{3}\right)\text{-bounded}
\]
cover such that every point of $Y$ belongs to sets of at least two
different colours.

Indeed, let
\[
\mathcal C=\bigcup_{i=1}^{n+1}\mathcal C^i
\]
be the original coloured cover. For each $C\in\mathcal C$, let $C'$
be its closed $a/3$-neighbourhood. The families
\[
\mathcal C'^{\,i}:=\{C':C\in\mathcal C^i\},
\qquad i=1,\ldots,n+1,
\]
are $a/3$-disjoint, and their members have diameter at most
$D+2a/3$. For each $B\in\mathcal C^j$, define
\[
B^0:=B\setminus
\bigcup_{\substack{1\leq i\leq n+1\\ i\neq j}}
\ \bigcup_{C\in\mathcal C^i} C'.
\]
The sets $B^0$ form an additional colour. The resulting cover has the
asserted boundedness and disjointness properties. Moreover, every
point belongs either to two enlarged sets of distinct colours, or to
an enlarged set and to a set of the additional colour.

Put
\[
t:=(n+2)s.
\]
For $k\in\mathbb N_0$, let
\[
I_k=[kt,(k+1)t).
\]

We first cover the inverse images of the intervals $I_k$ with $k$ odd.
By hypothesis, $f^{-1}(I_k)$ admits an $(n+1,t)$-disjoint,
$ct$-bounded cover. Applying the observation above, one obtains a cover
\[
\mathcal C_k=\bigcup_{i=1}^{n+2}\mathcal C_k^i
\]
of $f^{-1}(I_k)$ which is $(n+2,t/3)$-disjoint and
$(c+1)t$-bounded, and such that every point belongs to sets of at
least two different colours.

For $i=1,\ldots,n+2$, put
\[
J_{k,i}=[kt+(i-1)s,kt+is),
\]
and let $\mathcal B_k^i$ be the collection of connected components of
\[
I_k\setminus J_{k,i}.
\]
Each family $\mathcal B_k^i$ is $s$-disjoint, and every point of
$I_k$ belongs to members of at least $n+1$ of the families
$\mathcal B_k^i$.

Define
\[
\mathcal D_k^i
:=
\left\{
C\cap f^{-1}(B):
C\in\mathcal C_k^i,\ B\in\mathcal B_k^i,\
C\cap f^{-1}(B)\neq\varnothing
\right\}.
\]
The collection
\[
\mathcal D_k:=\bigcup_{i=1}^{n+2}\mathcal D_k^i
\]
covers $f^{-1}(I_k)$. Indeed, every point belongs to two colours of
$\mathcal C_k$ and to at least $n+1$ colours of the families
$\mathcal B_k^i$, so at least one colour occurs in both collections.

Since $t/3\geq s$ and $f$ is $1$-Lipschitz, each
$\mathcal D_k^i$ is $s$-disjoint. Moreover, every member of
$\mathcal D_k$ has diameter at most
\[
(c+1)t=(n+2)(c+1)s.
\]
Set
\[
A:=(n+2)(c+1),
\]
and, for $i=1,\ldots,n+2$, define
\[
\mathcal D^i:=
\bigcup_{\substack{k\geq0\\ k\text{ odd}}}\mathcal D_k^i.
\]
Since distinct odd intervals are separated by an interval of length
$t$, each family $\mathcal D^i$ is still $s$-disjoint. Thus the
union of the $\mathcal D^i$ covers the inverse images of all odd
intervals, and its members have diameter at most $As$.

We now cover the inverse images of the even intervals. Put
\[
\sigma:=(A+2)s.
\]
Notice that $\sigma\geq t$. In the asymptotic case we also have
$\sigma\geq s_0$. For every even $k\geq0$, the hypothesis, applied
with scale $\sigma$, gives an $(n+1,\sigma)$-disjoint,
$c\sigma$-bounded cover of
\[
f^{-1}([kt,kt+\sigma)).
\]
Intersecting its members with $f^{-1}(I_k)$ gives a cover
\[
\mathcal E_k=\bigcup_{i=1}^{n+1}\mathcal E_k^i
\]
of $f^{-1}(I_k)$ which is $(n+1,\sigma)$-disjoint and
$c\sigma$-bounded. Put
\[
\mathcal E^i:=
\bigcup_{\substack{k\geq0\\ k\text{ even}}}\mathcal E_k^i,
\qquad i=1,\ldots,n+1.
\]

For $E\in\mathcal E^i$, define
\[
E^*
:=
E\cup
\bigcup\{D\in\mathcal D^i:d(D,E)<s\}.
\]
Every point of $E^*$ lies at distance at most $As+s$ from $E$.
Consequently,
\[
\operatorname{diam}(E^*)
\leq
c\sigma+2(A+1)s
=
\bigl(c(A+2)+2A+2\bigr)s.
\]

We claim that, for fixed $i$, the sets $E^*$ are pairwise
$s$-disjoint. First suppose that $E,F\in\mathcal E_k^i$ belong to
the same even interval. Then
\[
d(E,F)\geq\sigma=(A+2)s.
\]
No set $D\in\mathcal D^i$ can satisfy both
$d(D,E)<s$ and $d(D,F)<s$, since otherwise
\[
d(E,F)
<
2s+\operatorname{diam}(D)
\leq
(A+2)s=\sigma,
\]
a contradiction. Since $\mathcal D^i$ is $s$-disjoint, it follows
that
\[
d(E^*,F^*)\geq s.
\]

Now suppose that $E$ and $F$ belong to distinct even intervals.
These intervals are separated by at least one odd interval, and hence
\[
d(E,F)\geq t\geq s.
\]
It remains to observe that no member of $\mathcal D^i$ can be within
distance less than $s$ of both $E$ and $F$. Indeed, if
$D\in\mathcal D_k^i$, then $f(D)$ is contained in one connected
component of
\[
I_k\setminus J_{k,i}.
\]
Every such component has distance at least $s$ from at least one of
the two endpoints of $I_k$. Since $f$ is $1$-Lipschitz, $D$ cannot
be within distance less than $s$ of both even intervals adjacent to
$I_k$. The claim follows.

For $i=1,\ldots,n+1$, let $\mathcal U^i$ consist of all sets $E^*$,
with $E\in\mathcal E^i$, together with those members of
$\mathcal D^i$ which were not adjoined to any such $E$. Finally, put
\[
\mathcal U^{n+2}:=\mathcal D^{n+2}.
\]
Then
\[
\mathcal U:=\bigcup_{i=1}^{n+2}\mathcal U^i
\]
is an $(n+2,s)$-disjoint cover of $X$. Every member of
$\mathcal U$ has diameter at most $Cs$, where one may take
\[
C=
\max\left\{
A,\,
c(A+2)+2A+2
\right\},
\qquad
A=(n+2)(c+1).
\]

In the first case the construction works for every $s>0$, and hence
$\dim_{\mathrm{AN}}(X)\leq n+1$. In the second case it works for
every $s\geq s_0$, and hence
$\operatorname{asdim}_{\mathrm{AN}}(X)\leq n+1$.
\end{proof}

The following lemma is essentially proved in \cite{FP} and \cite{JL}
(see also \cite{BBEGLPS}).

\begin{Lem}\label{lem:sphere-nagata}
There exists a universal constant $c_0>0$ with the following
property. If $(S,d)$ is a geodesic metric space homeomorphic to
$S^2$, then, for every $s>0$, $S$ admits a $(3,s)$-disjoint,
$c_0s$-bounded cover. In particular,
\[
\dim_{\mathrm{AN}}(S)\leq2,
\]
with a constant independent of the metric on $S$.
\end{Lem}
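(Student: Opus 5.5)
The plan is to follow the Fujiwara--Papasoglu / J{\o}rgensen--Lang strategy. The only topological input we need is the Jordan curve theorem on $S^2$.

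Fix $s>0$ and a basepoint $q\in S$, and let $g(x)=d(q,x)$. Cut $S$ into the annuli
\[
A_k=g^{-1}([ks,(k+1)s)).
\]
If $\operatorname{diam} S$ is at most a fixed multiple of $s$, one set suffices, so assume otherwise. Following \cite{FP}, the central claim is that each annulus $A_k$ has coarse dimension one uniformly. Concretely: for a suitable $m$, the ``thickened annulus'' $g^{-1}([ks,(k+m)s))$ can be chopped into pieces. Points of $A_k$ are put into the same piece when they can be joined by a chain with steps $<s$ inside the thicker annulus. The claim is that the resulting components of $A_k$ have diameter $\le c_1 s$. Given this, colour the annuli alternately and refine each with two colours as in the Hurewicz argument of Theorem~\ref{thm:annulus} (here with $n=1$, producing $n+2=3$ colours). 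This yields a $(3,s)$-disjoint, $c_0s$-bounded cover with $c_0$ depending only on the universal constants.

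The heart of the proof is the diameter bound for the chain-components. Suppose a component $Z$ of $A_k$ contains points $x,y$ with $d(x,y)> c_1 s$. Take geodesics from $q$ to $x$ and to $y$, and extend them (if needed) outward, using the geodesic property of $S$, to points far from $q$. Together with a chain-path in $Z$ from $x$ to $y$, made continuous by inserting short geodesic segments of length $<s$, these form a closed curve $\gamma$ through $q$. By the Jordan curve theorem on $S\cong S^2$, the curve $\gamma$ separates the sphere into two discs. Since $x,y$ are far apart, there are points $z$ at the same level near the middle of $Z$'s chain. Geodesics from $q$ to points beyond level $(k+m)s$ lying in the two complementary regions must cross the chain near level $ks$.

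The argument of \cite{JL} refines this counting. At most two ``sides'' are available, because in the plane/sphere a closed curve has exactly two complementary components. One shows that any three points of a long component force a geodesic from $q$ to cross a chain segment in a way contradicting $d(q,\cdot)$ being monotone along that geodesic, with a short-step chain in between. Carrying this out yields an explicit universal $c_1$, and hence $c_0$. The diameter of $S$ plays no role, so the constant is independent of the metric.

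The main obstacle is this separation step. The original arguments are phrased for planar spaces, and on $S^2$ one must make sure that the curve $\gamma$ built from geodesics and a chain is a genuine Jordan curve, or at least separates as needed. My first attempt would be to replace $\gamma$ by a simple closed sub-curve: arcs in Hausdorff spaces can be extracted from paths. I would also puncture $S$ at a point far from $q$ to reduce to the planar case. Concretely, $S$ minus a point is homeomorphic to $\R^2$, and the geodesic metric restricted to a large ball avoiding the puncture is locally the same. The results of \cite{FP}, \cite{JL} then apply essentially verbatim to the ball around $q$ of the relevant radius. The remaining bookkeeping is routine and gives the universal constant $c_0$.
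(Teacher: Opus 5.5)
Your overall frame (annuli about $q$, Theorem~\ref{thm:annulus} with $n=1$, the Jordan curve theorem as the only topological input) is the right one. However, the concrete form you give of the central claim is false. You assert that the components of $A_k$ under chains of step $<s$ in the thickened annulus have diameter at most $c_1 s$. Take a round sphere of radius $R\gg s$ with $q$ the north pole. The band $A_k$ near the equator is connected, hence a single such component, and its diameter is about $\pi R$, so no universal $c_1$ exists. Your claim is a dimension-zero statement about annuli. What \cite{FP} and \cite{JL} actually establish is that each annulus admits a $(2,s)$-disjoint, $cs$-bounded cover, which is exactly the input Theorem~\ref{thm:annulus} needs with $n=1$. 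The separation mechanism you sketch does not give this either. Geodesics in a geodesic space need not extend past their endpoints. And if the closed curve $\gamma$ passes through $q$, a geodesic from $q$ can enter either complementary region at $q$ itself without meeting $\gamma$ again, so no crossing near level $ks$ is forced.

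Your repair by puncturing also fails as a global reduction. A ball about $q$ that avoids the puncture $z$ does not cover $S$. For the outermost annuli, those within a few multiples of $s$ of $\max d(q,\cdot)$, there is no point farther from $q$ than the thickened annulus at which to puncture. Such annuli can have diameter comparable to $\operatorname{diam} S$: double a Euclidean half-disc of radius $R$ along its boundary and put $q$ at the midpoint of the straight edge; the points at distance at least $R-3s$ from $q$ then form a band of diameter $2R$. Moreover, a ball in $S\setminus\{z\}$ with the restricted metric is not a geodesic space, so the results of \cite{FP} and \cite{JL} cannot simply be quoted for it.

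The missing observation is that the needed separation is local. The only planar input in those proofs is \cite[Lemma~4.1]{FP}, which concerns an embedded theta curve lying in an annulus, hence missing the base point. On $S^2$ the complementary region of the theta curve containing the base point is bounded by two of its arcs, say $\alpha\cup\gamma$. By the Jordan curve theorem, $\alpha\cup\gamma$ separates the base point from the interior of the third arc $\beta$. So a geodesic from the base point to a suitable point of $\beta$ must meet $\alpha\cup\gamma$, exactly as in the plane. A puncture chosen afresh for each theta curve, in a complementary region not containing the base point, would also work. Everything after this step, namely \cite[Lemmas~4.2--4.4]{FP} and the Hurewicz step of \cite{JL}, is purely metric, which is what gives a constant $c_0$ independent of the metric on $S$.
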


\begin{proof}
The only place where planarity enters the proofs in \cite{FP} and
\cite{JL} is the separation argument used in \cite[Lemma~4.1]{FP}.
We explain why the same argument holds on a geodesic metric
$2$-sphere.

In \cite[Lemma~4.1]{FP} one considers an embedded theta curve
contained in a metric annulus. If its three arcs are denoted by
$\alpha,\beta,\gamma$, the arcs may be labelled so that the simple
closed curve $\alpha\cup\gamma$ separates the base point from the
interior of $\beta$. A geodesic from the base point to a suitable
point of $\beta$ must then meet $\alpha\cup\gamma$, which gives the
required contradiction.

The same argument applies on $S^2$. Indeed, an embedded theta curve
in $S^2$ has three complementary regions. If $e$ is the base point,
let $U$ be the complementary region containing $e$. The boundary of
$U$ is the union of two of the three arcs of the theta curve; denote
these by $\alpha$ and $\gamma$, and denote the third arc by $\beta$.
By the Jordan curve theorem, the simple closed curve
$\alpha\cup\gamma$ separates $e$ from the interior of $\beta$.
Thus the proof of \cite[Lemma~4.1]{FP} is unchanged.

After Lemma~4.1, the proofs of \cite[Lemmas~4.2--4.4]{FP} use only
the conclusion of Lemma~4.1 and metric arguments, and hence apply
without change to a geodesic metric $2$-sphere. Consequently the
annular covering statement used in the proof of
\cite[Theorem~2]{JL} holds, with the same universal constants, for
geodesic metric $2$-spheres as well. The remainder of the proof of
\cite[Theorem~2]{JL} is the Hurewicz-type covering argument and does
not use planarity. It follows that there is a universal constant
$c_0$ such that, for every $s>0$, $S$ admits a $(3,s)$-disjoint,
$c_0s$-bounded cover.
\end{proof}

\begin{Thm}\label{thm:AN-combing}
Let $X$ be a $3$-manifold with a proper metric. If $X$ admits a
convex geodesic bicombing, then
\[
\dim_{\mathrm{AN}}(X)\leq3.
\]
In particular,
\[
\operatorname{asdim}_{\mathrm{AN}}(X)\leq3.
\]
\end{Thm}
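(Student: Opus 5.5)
The plan is to verify the hypothesis of Theorem~\ref{thm:annulus} with $n=2$. Fix $p\in X$. We will find a universal constant $c$ such that, for every $r\geq0$ and $s>0$, the annulus $A(p;r,s)$ admits a $(3,s)$-disjoint, $cs$-bounded cover in the metric $d$ of $X$. Theorem~\ref{thm:annulus} then gives $\dim_{\mathrm{AN}}(X)\leq 3$, and the inequality $\operatorname{asdim}_{\mathrm{AN}}\leq\dim_{\mathrm{AN}}$ gives the second statement.

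\textbf{Case of small $r$.} Suppose $r\leq s$. Then $A(p;r,s)\subset B(p,2s)$, and the single set $A(p;r,s)$ (one colour) is a cover of diameter at most $4s$. So we may assume $r>s$.

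\textbf{Construction of the cover.} Put $r'=r-s/2>0$ and consider the enlarged annulus $A'=\{x: r'\le d(p,x)< r+2s\}$. By Corollary~\ref{C:gsph}, $S'=S(p,r')$ with its path metric $d_\ell$ is a geodesic metric $2$-sphere. By Lemma~\ref{lem:sphere-nagata}, for the scale $2s$ it has a $(3,2s)$-disjoint, $2c_0s$-bounded cover $\mathcal V=\mathcal V^1\cup\mathcal V^2\cup\mathcal V^3$ with respect to $d_\ell$.

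Let $\rho:X\setminus B(p,r')\to S'$ be the radial map $\rho(x)=\tau_p(x,r'/d(p,x))$. By Lemma~\ref{L:lip} (applied as in the proof of the homeomorphism theorem, to geodesics lying in $X\setminus B(p,r')$), $\rho$ is $2$-Lipschitz from the intrinsic metric of $X\setminus B(p,r')$ to $d_\ell$. Define the cover of $A(p;r,s)$ by $U=\rho^{-1}(V)\cap A(p;r,s)$ for $V\in\mathcal V^i$, keeping the colour $i$.

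\textbf{Boundedness.} For $x,y\in U$, go radially from $x$ to $\rho(x)$, which costs at most $2s$. Then use $d(\rho(x),\rho(y))\le d_\ell(\rho(x),\rho(y))\le 2c_0s$, and finally go radially back to $y$. This gives $\operatorname{diam} U\le (2c_0+4)s$.

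\textbf{Disjointness (the main obstacle).} This is the step where the two metrics must be compared. Take $x\in U$ and $y\in U'$ of the same colour with $d(x,y)<s$; we want a contradiction. By convexity of $t\mapsto d(p,\sigma_{xy}(t))$, which follows from the convexity of $d(\sigma_{pp}(t),\sigma_{xy}(t))$, the geodesic $\sigma_{xy}$ stays at distance $<r+s$ from $p$. It also stays at distance $>r-s/2=r'$ from $p$, since $d(p,\cdot)$ is $1$-Lipschitz and every point of $\sigma_{xy}$ is within $s/2$ of $x$ or of $y$, both of which lie at distance $\geq r$ from $p$. Thus $\sigma_{xy}\subset X\setminus B(p,r')$. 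Applying $\rho$ along $\sigma_{xy}$ via Lemma~\ref{L:lip} gives
\[
d_\ell(\rho(x),\rho(y))\le 2d(x,y)<2s,
\]
which contradicts the $2s$-disjointness of $\mathcal V^i$. The only subtlety is that the $2$-Lipschitz bound of Lemma~\ref{L:lip} is stated for $d$, so we apply it to short subsegments of $\sigma_{xy}$ and sum, which yields a bound on the length of $\rho\circ\sigma_{xy}$, and hence on $d_\ell$.

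\textbf{Conclusion.} We obtain $c=2c_0+4$ (also covering the small-$r$ case), independent of $r$ and $s$, and Theorem~\ref{thm:annulus} finishes the proof.
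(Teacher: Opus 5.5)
Your proposal is correct and follows essentially the same route as the paper: cover the sphere $S(p,r-s/2)$ in its path metric using Lemma~\ref{lem:sphere-nagata}, pull the cover back under the $2$-Lipschitz radial map, get disjointness from the fact that the bicombing geodesic $\sigma_{xy}$ between points at distance $<s$ avoids $B(p,r-s/2)$, and conclude with Theorem~\ref{thm:annulus} for $n=2$. The only difference is cosmetic: you project $\sigma_{xy}$ directly, whereas the paper first proves disjointness in the path metric $d_{\widetilde A}$ of the enlarged annulus and then shows $d_{\widetilde A}=d$ for nearby points.
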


\begin{proof}
Fix $p\in X$. By Corollary~\ref{C:gsph}, for every $R>0$ the
sphere $S(p,R)$, equipped with its induced path metric, is a compact
geodesic metric space homeomorphic to the $2$-sphere.

By Lemma~\ref{lem:sphere-nagata}, there is a universal constant
$c_0>0$ such that, for every $R>0$ and every $a>0$, the sphere
$S(p,R)$, equipped with its path metric, admits a $(3,a)$-disjoint,
$c_0a$-bounded cover.

We now verify the hypothesis of Theorem~\ref{thm:annulus} with
$n=2$. Fix $r\geq0$ and $s>0$, and consider
\[
A':=A(p;r,s)
=
\{x\in X:r\leq d(p,x)<r+s\}.
\]

If $r\leq s/2$, then
\[
A'\subset B(p,r+s)\subset B(p,3s/2),
\]
and therefore
\[
\operatorname{diam}(A')\leq3s.
\]
Thus in this case the single set $A'$ gives the required cover.

Suppose now that $r>s/2$, and put
\[
R:=r-\frac{s}{2}>0.
\]
Let
\[
S_R:=S(p,R),
\]
equipped with its induced path metric $d_R$. By Lemma~\ref{lem:sphere-nagata},
$S_R$ admits a $(3,2s)$-disjoint,
$2c_0s$-bounded cover
\[
\mathcal B=\bigcup_{i=1}^3\mathcal B^i.
\]

Consider the enlarged annulus
\[
\widetilde A
:=
\left\{
x\in X:
r-\frac{s}{2}\leq d(p,x)<r+s
\right\},
\]
and equip $\widetilde A$ with its induced path metric
$d_{\widetilde A}$.

The contraction $\tau_p$ defines the radial map
\[
\rho:\widetilde A\longrightarrow S_R,
\qquad
\rho(x)
=
\tau_p\left(x,\frac{R}{d(p,x)}\right).
\]
By Lemma~\ref{L:lip}, $\rho$ is $2$-Lipschitz with respect to the ambient
metrics. It follows that
\[
d_R(\rho(x),\rho(y))
\leq
2d_{\widetilde A}(x,y)
\]
for all $x,y\in\widetilde A$. Indeed, if $\gamma$ is a rectifiable
path in $\widetilde A$ joining $x$ to $y$, then $\rho\circ\gamma$ is
a path in $S_R$ and
\[
\operatorname{length}(\rho\circ\gamma)
\leq
2\operatorname{length}(\gamma).
\]

For $B\in\mathcal B$, put
\[
W_B:=\rho^{-1}(B).
\]
The sets $W_B$ form a $(3,s)$-disjoint cover of
$(\widetilde A,d_{\widetilde A})$. Indeed, if $B,B'$ are distinct
members of the same colour, then
\[
d_R(B,B')\geq2s,
\]
and hence
\[
d_{\widetilde A}(W_B,W_{B'})\geq s.
\]

This cover is also uniformly bounded in the path metric. If
$x,y\in W_B$, join $x$ radially to $\rho(x)$, then join
$\rho(x)$ to $\rho(y)$ inside $S_R$, and finally join $\rho(y)$
radially to $y$. Each radial segment has length less than
\[
(r+s)-\left(r-\frac{s}{2}\right)=\frac{3s}{2},
\]
while
\[
d_R(\rho(x),\rho(y))
\leq
2c_0s.
\]
Therefore
\[
d_{\widetilde A}(x,y)
\leq
(2c_0+3)s.
\]
Thus the cover $\{W_B:B\in\mathcal B\}$ is
$(2c_0+3)s$-bounded in $d_{\widetilde A}$.

We restrict this cover to the middle annulus $A'$. The boundedness
with respect to the ambient metric $d$ is immediate, since
\[
d(x,y)\leq d_{\widetilde A}(x,y).
\]
The only point which requires care is the $s$-disjointness, since in
general the path metric of an annulus may be much larger than the
ambient metric.

We claim that if $x,y\in A'$ and
\[
d(x,y)<s,
\]
then
\[
d_{\widetilde A}(x,y)=d(x,y).
\]
Let
\[
\gamma(t)=\sigma_{xy}(t),
\qquad 0\leq t\leq1,
\]
be the geodesic supplied by the bicombing. By convexity of the
bicombing, applied to $\sigma_{xy}$ and the constant geodesic at $p$,
\[
d(p,\gamma(t))
\leq
(1-t)d(p,x)+td(p,y)
<
r+s.
\]
Thus $\gamma$ does not leave $\widetilde A$ through its outer
boundary.

Suppose that for some point $z$ of $\gamma$ one had
\[
d(p,z)<r-\frac{s}{2}.
\]
Since $x,y\in A'$, we have $d(p,x),d(p,y)\geq r$, and hence
\[
d(x,z)
\geq
d(p,x)-d(p,z)
>
\frac{s}{2},
\]
and similarly
\[
d(z,y)>\frac{s}{2}.
\]
But $z$ lies on the geodesic from $x$ to $y$, so
\[
d(x,y)=d(x,z)+d(z,y)>s,
\]
a contradiction. Therefore
\[
r-\frac{s}{2}
\leq
d(p,\gamma(t))
<
r+s
\]
for every $t$, and hence $\gamma\subset\widetilde A$. It follows that
\[
d_{\widetilde A}(x,y)=d(x,y),
\]
as claimed.

Now suppose that two distinct members of the same colour of the
restricted cover of $A'$ were at ambient distance less than $s$.
There would then be points $x,y$ in these two sets with
$d(x,y)<s$. By the claim,
\[
d_{\widetilde A}(x,y)=d(x,y)<s,
\]
contradicting the $s$-disjointness of the cover in
$(\widetilde A,d_{\widetilde A})$.

We have therefore shown that, for every $r\geq0$ and every $s>0$,
the annulus $A(p;r,s)$ admits a $(3,s)$-disjoint,
$Cs$-bounded cover, where
\[
C=\max\{3,2c_0+3\}
\]
is independent of $r$ and $s$. Theorem~\ref{thm:annulus}, with
$n=2$, now gives
\[
\dim_{\mathrm{AN}}(X)\leq3.
\]
The inequality
\[
\operatorname{asdim}_{\mathrm{AN}}(X)\leq3
\]
follows immediately.
\end{proof}

We recall that ${\mathbf I}_{k,\mathrm c}(X)$ denotes the group of compactly supported $k$-dimensional metric integral currents in $X$; $\partial$ denotes the boundary operator and ${\mathbf M}$ the mass.

\begin{Cor}\label{cor:isoperimetric}
Let $X$ be a proper CAT$(0)$ $3$-manifold of asymptotic rank at most
$2$. Then, for every $k\geq2$, there exists a constant $C=C(X,k)$
such that every cycle
\[
T\in {\mathbf I}_{k,\mathrm c}(X)
\]
admits a filling $V\in {\mathbf I}_{k+1,\mathrm c}(X)$ with
$\partial V=T$ and
\[
{\mathbf M}(V)\leq C\,{\mathbf M}(T).
\]
\end{Cor}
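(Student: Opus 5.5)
The corollary follows by checking the hypotheses of the filling theorems of Lang--Stadler--Urech \cite{LSU} and Peteranderl \cite{Pet} and then applying them. As used in the introduction, those results say the following. Let $X$ be a proper CAT$(0)$ space of asymptotic rank at most $2$ with finite asymptotic Assouad--Nagata dimension. Then \cite[Theorem~1.1]{LSU} gives, for each $k\geq 2$, fillings of $k$-cycles with mass bounded by $C\,{\mathbf M}(T)^{1+\delta}$ for every $\delta>0$. Peteranderl \cite{Pet} upgrades this to the linear bound ${\mathbf M}(V)\leq C\,{\mathbf M}(T)$. So everything reduces to verifying the hypotheses for a proper CAT$(0)$ $3$-manifold.

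The steps are as follows.

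First, a proper CAT$(0)$ space is uniquely geodesic. Setting $\sigma_{xy}$ to be the constant speed parametrization of the unique geodesic from $x$ to $y$ gives a geodesic bicombing. The CAT$(0)$ comparison inequality applied to the two geodesics $\sigma_{ab}$ and $\sigma_{cd}$ shows that $t\mapsto d(\sigma_{ab}(t),\sigma_{cd}(t))$ is convex; this is the standard argument in \cite{BRI-HAE}, and it is also noted in \cite{DE-LA}. Hence $\sigma$ is a convex geodesic bicombing.

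Second, Theorem~\ref{thm:AN-combing} now applies to $X$, which is a $3$-manifold with a proper metric. It gives $\dim_{\mathrm{AN}}(X)\leq 3$, and therefore $\operatorname{asdim}_{\mathrm{AN}}(X)\leq 3<\infty$.

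Third, together with the assumption that the asymptotic rank is at most $2$ (in the sense of \cite{LSU}), every hypothesis of \cite{LSU} and \cite{Pet} holds. Their conclusions apply to compactly supported integral $k$-cycles for all $k\geq 2$, with $X$ proper so that ${\mathbf I}_{k,\mathrm c}(X)$ is as in \cite{AK}. We therefore obtain $V\in{\mathbf I}_{k+1,\mathrm c}(X)$ with $\partial V=T$ and ${\mathbf M}(V)\leq C\,{\mathbf M}(T)$, where $C$ depends on $X$ (through the Assouad--Nagata constants and rank data) and on $k$.

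I expect the main obstacle to be bookkeeping rather than a new argument. One has to confirm that the cited theorems are stated for proper CAT$(0)$ spaces in exactly this generality, with no smoothness, cocompactness or finite-dimensionality assumption beyond finite $\operatorname{asdim}_{\mathrm{AN}}$. One also has to confirm that the range $k\geq 2$ and the use of compact supports match their statements. If \cite{Pet} happens to require the Assouad--Nagata dimension at all scales instead of asymptotically, the stronger bound $\dim_{\mathrm{AN}}(X)\leq 3$ from Theorem~\ref{thm:AN-combing} already covers that case.
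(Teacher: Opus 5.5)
Your proposal is correct and follows the paper's proof. You use the CAT$(0)$ geodesics as a convex bicombing, apply Theorem~\ref{thm:AN-combing} to get finite (asymptotic) Assouad--Nagata dimension, and then invoke \cite{Pet}; your explicit convexity check is a step the paper leaves to its earlier remark citing \cite{DE-LA}.
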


\begin{proof}
By Theorem~\ref{thm:AN-combing}, $X$ has Assouad--Nagata dimension at
most $3$, and hence finite asymptotic Assouad--Nagata dimension. Since
$X$ is proper and CAT$(0)$, it is a Hadamard space. The result now
follows from \cite{Pet}.
\end{proof}

\end{document}